\def\UseBibLatex{1}
\def\input@path{{styles/}}
\newcommand{\UsePackage}[1]{%
  \IfFileExists{styles/#1.sty}{%
      \usepackage{styles/#1}%
   }{%
      \IfFileExists{../styles/#1.sty}{%
         \usepackage{../styles/#1}%
      }{%
         \usepackage{#1}%
      }%
   }%
}
\theoremstyle{plain}%
\newtheorem{theorem}{Theorem}[section]
\newtheorem{lemma}[theorem]{Lemma}
\newtheorem{prop}[theorem]{Proposition}
\newtheorem{cor}[theorem]{Corollary}
\theoremstyle{definition}
\newtheorem{definition}[theorem]{Definition}
\newtheorem{example}[theorem]{Example}
\theoremstyle{remark}
\newtheorem{remark}[theorem]{Remark}
\newtheorem{rmk}[theorem]{Remark}
\newcommand{\ben}{\begin{enumerate}}
\newcommand{\een}{\end{enumerate}}
\newcommand{\R}{\mathbb{R}}
\newcommand{\Z}{\mathbb{Z}}
\providecommand{\emphind}[1]{}%
\renewcommand{\emphind}[1]{\emph{#1}\index{#1}}
\definecolor{blue25emph}{rgb}{0, 0, 11}
\providecommand{\emphic}[2]{}
\renewcommand{\emphic}[2]{\textcolor{blue25emph}{%
      \textbf{\emph{#1}}}\index{#2}}
\providecommand{\emphi}[1]{}%
\renewcommand{\emphi}[1]{\emphic{#1}{#1}}
\definecolor{almostblack}{rgb}{0, 0, 0.3}
\providecommand{\emphw}[1]{}%
\renewcommand{\emphw}[1]{{\textcolor{almostblack}{\emph{#1}}}}%
\providecommand{\emphOnly}[1]{}%
\renewcommand{\emphOnly}[1]{\emph{\textcolor{blue25}{\textbf{#1}}}}
\newcommand{\LucThanks}[1]{%
   \thanks{%
      Department of Mathematics; %
      University of Pittsburgh; %
      200 S. Craig St.; %
      Pittsburgh, PA 15260, USA; %
      \href{mailto:ldt37@pitt.edu}{ldt37@pitt.edu}; %
      \url{https://luc-ta.github.io/}. %
   #1%
   }%
}
\newcommand{\PeytonThanks}[1]{%
   \thanks{%
      Department of Mathematics; %
      University of California, Davis; %
      One Shields Ave; %
      Davis, CA, 95616, USA; %
      \href{mailto:ppwood@ucdavis.edu}{ppwood@ucdavis.edu}; %
      \url{https://sites.google.com/view/peytonpwood/}. %
   #1%
   }%
}
\newcommand{\HLink}[2]{\hyperref[#2]{#1~\ref*{#2}}}
\newcommand{\HLinkSuffix}[3]{\hyperref[#2]{#1\ref*{#2}{#3}}}
\providecommand{\deflab}[1]{}
\renewcommand{\deflab}[1]{\label{def:#1}}
\providecommand{\eqlab}[1]{}%
\renewcommand{\eqlab}[1]{\label{equation:#1}}
\newcommand{\remove}[1]{}%
\newlist{compactenumA}{enumerate}{5}%
\setlist[compactenumA]{topsep=0pt,itemsep=-1ex,partopsep=1ex,parsep=1ex,%
   label=(\Alph*)}%
\newlist{compactenuma}{enumerate}{5}%
\setlist[compactenuma]{topsep=0pt,itemsep=-1ex,partopsep=1ex,parsep=1ex,%
   label=(\alph*)}%
\newlist{compactenumI}{enumerate}{5}%
\setlist[compactenumI]{topsep=0pt,itemsep=-1ex,partopsep=1ex,parsep=1ex,%
   label=(\Roman*)}%
\newlist{compactenumi}{enumerate}{5}%
\setlist[compactenumi]{topsep=0pt,itemsep=-1ex,partopsep=1ex,parsep=1ex,%
   label=(\roman*)}%
\newlist{compactitem}{itemize}{5}%
\setlist[compactitem]{topsep=0pt,itemsep=-1ex,partopsep=1ex,parsep=1ex,%
   label=\ensuremath{\bullet}}%
\providecommand{\BibLatexMode}[1]{}
\providecommand{\BibTexMode}[1]{}
  \renewcommand{\BibLatexMode}[1]{}
  \renewcommand{\BibTexMode}[1]{#1}
  \renewcommand{\BibLatexMode}[1]{#1}
  \renewcommand{\BibTexMode}[1]{}
\numberwithin{equation}{section}%
\newcommand{\ul}{u_l}
\newcommand{\vl}{v_l}
\newcommand{\ur}{u_r}
\newcommand{\vr}{v_r}
\newcommand{\dl}{d_l}
\newcommand{\dr}{d_r}
\DeclareMathOperator{\tb}{tb}
\DeclareMathOperator{\rot}{rot}
\DeclareMathOperator{\Aut}{Aut}
\DeclareMathOperator{\Inn}{Inn}
\renewcommand{\phi}{\varphi}
\newcommand{\inv}{^{-1}}
\newcommand{\tr}{\triangleright}
\DeclareMathOperator{\id}{id}
\DeclareMathOperator{\Conj}{Conj}
\title{Distinguishing Power of 4-Legendrian Permutation Racks}
\author{%
   L\d\uhorn c Ta
   \LucThanks{}%
   \and%
   Peyton Phinehas Wood%
   \PeytonThanks{}%
}
\date{\today}
\begin{document}
\maketitle

\begin{abstract}
    We study 4-Legendrian racks and their effectiveness at distinguishing Legendrian knots. We prove that permutation racks with 4-Legendrian rack structures cannot distinguish Legendrian knots that share the same knot type, Thurston–Bennequin number, and rotation number. However, they also recover these three classical invariants.
\end{abstract}

    KEYWORDS: 4-Legendrian rack, invariant, Legendrian knot, Legendrian rack, permutation rack,  rotation number, Thurston--Bennequin number


\section{Introduction}
    In 2023 and 2025, Kimura \cite{kimura-jktr} and Cheng and He \cite{Cheng_2025fundamentalgeneralizedlegendrianrack} posed the question of whether certain algebraic objects called \emph{4-Legendrian racks} can distinguish Legendrian knots with the same topological knot type, Thurston--Bennequin number, and rotation number. In this work, we provide a negative answer for all 4-Legendrian permutation racks.
        \begin{theorem}[Theorem \ref{perm4LRcannotdist}]\label{perm4LRcannotdist0}
    Permutation racks with 4-Legendrian structures cannot distinguish isotopy classes of Legendrian knots with the same classical invariants.
\end{theorem}

Since permutation racks are the main examples of racks that are not quandles in the literature, Theorem \ref{perm4LRcannotdist0} and a similar result of Kimura for 4-Legendrian quandles \cite*[Thm.\ 4.2.3]{kimura-thesis} settle this question in the negative for very large classes of 4-Legendrian racks.

On the other hand, we also show that the Legendrian knot invariants provided by 4-Legendrian racks are at least as powerful as the classical invariants.
    \begin{theorem}[Corollary \ref{cor:classical}]\label{thm:classical}
        The fundamental 4-Legendrian rack $F(L)$ recovers the classical invariants of $L$.
    \end{theorem}
    Theorem \ref{thm:classical} can be viewed as a stronger version of the main theorem in a paper of Cheng and He \cite{Cheng_2025fundamentalgeneralizedlegendrianrack} from 2025. To prove their result, Cheng and He computed a normal form for words in certain colorings of Legendrian knots by \emph{GL-racks}, which are a special class of 4-Legendrian racks. By adapting this method to 4-Legendrian racks more generally, we obtain a key lemma (Lemma \ref{lem:permutation}) used to prove both Theorem \ref{perm4LRcannotdist0} and Theorem \ref{thm:classical}.
    
    Kimura introduced 4-Legendrian racks in his 2024 PhD thesis \cite{kimura-thesis} by equipping algebraic structures called \emph{racks} with extra structure. Racks are generalizations of algebraic structures called \emph{quandles}, which Joyce \cite{Joyce_1982} and Matveev \cite{1984_Matveev} independently introduced in the 1980s to construct invariants of knots and links. In 1992, Fenn and Rourke \cite{Fenn_Rouke_racks92} introduced racks to study framed knots and links. It was quickly realized that every semi-framed non-split link embedded in a 3-manifold has a fundamental rack that classifies both the link and the 3-manifold.

Although rack theory and Legendrian knot theory \cite{etnyre} were popularized concurrently in the 1990s, it was not until 2017 when Kulkarni and Prathamesh \cite{KP_2017rackinvariantslegendrianknots} introduced the first rack-theoretic invariants of Legendrian knots. They called these invariants \emph{$n$-Legendrian racks}, and they used these invariants to distinguish infinitely many Legendrian unknots \cite[Main Thm.\ 2]{KP_2017rackinvariantslegendrianknots}.\footnote{Despite the name, 4-Legendrian racks are not a special class of $n$-Legendrian racks; in fact, the former vastly generalizes the latter.} In 2021, Ceniceros, Elhamdadi, and Nelson \cite{CEN_2021legendrianrackinvariantslegendrian} generalized $n$-Legendrian racks by introducing \emph{Legendrian racks}, which are 4-Legendrian racks where $\ul=\ur=\dl=\dr$; cf.\ \eqref{eq:d-inv}. They used Legendrian racks to distinguish certain Legendrian trefoils and connected sums of Legendrian trefoils \cite[Sec.\ 5]{CEN_2021legendrianrackinvariantslegendrian}.

Further generalizing Legendrian racks, Kimura \cite{kimura-jktr} and Karmakar, Saraf, and Singh \cite{Karmakar_2025} independently introduced \emph{GL-racks} (also called \emph{generalized Legendrian racks} or \emph{bi-Legendrian racks}) in 2023. GL-racks are 4-Legendrian racks where $\ul=\ur$ (and, hence, $\dl=\dr$; cf.\ \eqref{eq:d-inv}). They distinguish infinitely many Legendrian trefoils and even more Legendrian unknots than $n$-Legendrian racks can; see \cite[Thm.\ 4.1]{kimura-jktr} and \cite[Thms.\ 4.7 and 4.8]{Karmakar_2025}. 
In 2025, Cheng and He \cite[Thm.\ 1.1]{Cheng_2025fundamentalgeneralizedlegendrianrack} further showed that GL-racks distinguish Legendrian knots at least up to the absolute values of their classical invariants.

However, none of the above examples of Legendrian knots answer the question of whether rack invariants can distinguish Legendrian knots with the same topological knot type and classical invariants. Kimura \cite{kimura-jktr} and Cheng and He \cite{Cheng_2025fundamentalgeneralizedlegendrianrack} both posed this question. In his PhD thesis, Kimura \cite[Sec.\ 4.2]{kimura-thesis} discovered several examples of nonequivalent Legendrian knots sharing these invariants that \emph{cannot} be distinguished using 4-Legendrian racks. He also showed that 4-Legendrian quandles cannot distinguish \emph{any} such Legendrian knots \cite[Thm.\ 4.2.3]{kimura-thesis}. Our main theorem (Theorem \ref{perm4LRcannotdist0}) extends this result to all 4-Legendrian permutation racks. We note that while Kimura's result uses a theorem from contact topology, our approach is purely algebro-combinatorial.

On the algebraic side of things, the first author \cite{Ta_2025classificationstructuregeneralizedlegendrian} studied GL-racks from group-theoretic, categorical, and universal-algebraic perspectives in 2025. In particular, a simplified but equivalent definition of GL-racks led to a group-theoretic characterization of GL-structures, answering a question posed by Karmakar, Saraf, and Singh in a previous version of \cite{Karmakar_2025}. Combining \cite[Thm.\ 5.6]{Ta_2025classificationstructuregeneralizedlegendrian} and \cite[Thm.\ 10.1]{Ta_2025goodinvolutionsconjugationsubquandles} yields the surprising result that the categories of racks, Legendrian racks, and GL-quandles are isomorphic. Moreover, certain involutory GL-racks have connections to \emph{symmetric racks} (also called racks with \emph{good involutions}) \cite*[Thm.\ 10.1]{Ta_2025goodinvolutionsconjugationsubquandles}, which are used to distinguish surface-knots in $\R^4$. These connections formalized an observation that Karmakar, Saraf, and Singh made in a previous version of \cite{Karmakar_2025}.

    This paper is organized as follows. In Section \ref{review}, we review common rack theory ideas and definitions and provide several examples of families of racks that we explore later in this paper. 
    In Section \ref{4LR}, we define 4-Legendrian racks and discuss their classification as algebraic structures. 
    In Section \ref{LK}, we review classical and rack-theoretic invariants of Legendrian knots.
    In Section \ref{main}, we prove Theorems \ref{perm4LRcannotdist0} and \ref{thm:classical}. In Section \ref{future}, we propose areas for future research.


\section{Review of Racks}\label{review}

\begin{definition}[Rack, \cite{Elhamdadi_Nelson_2015}]
    A {\em rack} $(X,\triangleright)$ is a set X with binary operation $\triangleright \colon X \times X \to X$ satisfying the following two axioms:
    \begin{enumerate}
        \item R1 (Invertibility): For all y $\in$ X, the map $\beta_y \colon X \to X$ defined by $\beta_y(x)=x\triangleright y$ is invertible. We denote $\beta^{-1}_y(x)$ by $x \triangleright^{-1} y$. 
        \item R2 (Self-distributivity): For all $x, y, z \in X$, we have $(x \triangleright y) \triangleright z = (x \triangleright z) \triangleright (y \triangleright z) $.
    \end{enumerate}
    \end{definition}

    A rack is called a {\em quandle} if the binary operation $\triangleright$ is idempotent, that is, $x \triangleright x = x$ for all $x\in X$. We say that $|X|$ is the \textit{order} of $(X,\tr)$. Finite racks have been enumerated up to isomorphism for orders up to 13 and fully classified for orders up to 11 \cite{Vor_Yang_2019}.


In the following, let $n\geq 1$ be a positive integer.

\begin{example}[Trivial quandle]
    The \emph{trivial quandle} of order $n$, denoted $T_n$, has underlying set $X=\mathbb{Z}/n\Z$ and quandle operation $x \triangleright y \coloneq x$.
\end{example}

\begin{example}[Dihedral quandle]
    The \emph{dihedral quandle} of order $n$, denoted $R_n$, has underlying set $X=\mathbb{Z}/n\Z $ and quandle operation $x \triangleright y \coloneq 2y-x \pmod{n}$. 
\end{example}

 \begin{example}[Alexander quandle]
     Pick $t \in \Z/n\Z$ such that $\gcd(t,n)=1$. The quandle with underlying set $X=\mathbb{Z}/n\Z$ and quandle operation $x \triangleright y \coloneq (1-t)y+tx$ is called an \emph{Alexander quandle} of order $n$.
 \end{example}

 \begin{example}[Conjugation quandle]
     Let $G$ be any group. The \emph{conjugation quandle} is the quandle $\Conj(G)\coloneq(G,\triangleright)$ with operation $a \triangleright b \coloneq bab^{-1}$. 
 \end{example}

 \begin{example}[Core quandle]
     Let $G$ be any group. The \emph{core quandle} is the quandle $\operatorname{Core}(G)\coloneq(G,\triangleright)$ with operation $a \triangleright b \coloneq ba^{-1}b$. 
 \end{example}

 \begin{example}[Takasaki quandle]
     Let $A$ be any additive abelian group. The \emph{Takasaki quandle} is the quandle $T(A)\coloneq(A,\tr)$ with operation $a \triangleright b \coloneq 2b - a $.
 \end{example}

 \begin{remark}
     When $A$ is abelian, Core($A$) = $T(A)$. Additionally, choosing $A=\Z/n\Z$ for Takasaki quandles yields the dihedral quandle, $R_n$.
 \end{remark}

     All of the above examples are quandles. 
     Examples of racks that are not quandles are much less common in the literature; the following is the usual example.

\begin{example}[Permutation rack]
   Let $X$ be a set, and let $\sigma \in S_X$ be a permutation of $X$. The \emph{permutation rack} or \emph{constant action rack} is the rack $X_\sigma\coloneq(X,\tr)$ with operation $x \triangleright y \coloneq \sigma(x)$.
\end{example}

\begin{remark}
    Letting $\sigma \coloneq \id_X$ results in the trivial quandle. Otherwise, $X_\sigma$ is a rack that is not a quandle as it will violate idempotency for at least one element.
\end{remark}

\begin{example}[$(t,s)$-rack]
    Let $X$ be a module over the quotient ring $\mathbb{Z}[t^{\pm},s] / (s^2-s(1-t))$. The rack with underlying set $X$ and rack operation $x \triangleright y \coloneq tx+sy$ is called a \emph{$(t,s)$-rack}.
\end{example}

\begin{remark}
    All $(t, 1-t)$-racks are Alexander quandles. When $s \ne 1-t$, the resulting rack is not a quandle as $x \tr x \ne x$.
\end{remark}

\subsection{Rack automorphisms}

\begin{definition}
    Let $(X,\tr_X)$ and $(Y,\tr_Y)$ be racks. A map $\phi\colon X\to Y$ is called a \emph{rack homomorphism} if $\phi(x_1\tr_X x_2)=\phi(x_1)\tr_Y\phi(x_2)$ for all $x_1,x_2\in X$. 
    
    If $(X,\tr_X)=(Y,\tr_Y)$ and $\phi$ is bijective, we say that $\phi$ is a \emph{rack isomorphism}. A rack isomorphism from $(X,\tr_X)$ to itself is a \emph{rack automorphism}. The \emph{automorphism group} of $(X,\tr_X)$ is denoted by $\Aut(X)$.
\end{definition}

Important to the theory of racks is the following canonical automorphism $\pi$. The name comes from the map's association with kinks in diagrams of framed knots.

\begin{definition}[{\cite[p.\ 149]{Elhamdadi_Nelson_2015}}]
    Let $(X,\tr)$ be a rack. The \emph{kink map} $\pi\colon X\to X$ is the function defined by $x\mapsto x\tr x$.
\end{definition}

One can show that $\pi$ is a rack automorphism; the inverse map $\pi\inv$ is given by $x\mapsto x\tr\inv x$. One can also show that $\pi$ and $\pi\inv$ commute with all other rack homomorphisms, implying that $\pi$ and $\pi\inv$ lie in the center of the category of racks. See, for example, \cite[Sec.~2.2]{Ta_2025classificationstructuregeneralizedlegendrian}.

\begin{example}
    A rack $(X,\tr)$ is a quandle if and only if $\pi=\id_X$. Thus, $\pi$ can be thought of as the obstruction to being a quandle.
\end{example}

\begin{example}\label{pi_is_sigma}
    If $X_\sigma$ is a permutation rack, then $\pi=\sigma$.
\end{example}

Another important class of rack automorphisms is the following.

\begin{definition}
    Let $(X,\tr)$ be a rack. The \emph{inner automorphism group} $\Inn(X)$ (also called the \emph{operator group} or \emph{right multiplication group}) is the subgroup of the symmetric group $S_X$ generated by the maps $\beta_x$ ranging over all $x\in X$:
    \[
    \Inn(X)\coloneq\langle\beta_x\mid x\in X\rangle.
    \]
\end{definition}

\begin{remark}\label{rmk:normal}
    The rack axioms state precisely that $\Inn(X)$ is a subgroup of $\Aut(X)$. A quick check shows that this subgroup is normal.
\end{remark}


\section{4-Legendrian Racks}\label{4LR}

In the following, let $(X,\tr)$ be a rack.

\begin{definition}[{Cf.\ \cite[Sec.\ 4.1]{Ta_2025classificationstructuregeneralizedlegendrian}}]
    Define the group $U_X$ to be the centralizer \[U_X:=C_{\Aut (X)}(\Inn(X)).\] We say that elements of $U_X$ are \emph{GL-structures} on $(X,\tr)$.\ Ordered pairs of GL-structures $(\ul,\ur)\in U_X\times U_X$ are called \emph{4-Legendrian structures} on $(X,\tr)$.
\end{definition}

Note that, by Remark \ref{rmk:normal}, $U_X$ is a normal subgroup of $\Aut (X)$.

\begin{definition}
    A \emph{4-Legendrian rack} is a quadruple $(X,\tr,\ul,\ur)$ where $(X,\tr)$ is a rack and $(\ul,\ur)$ is a 4-Legendrian structure on $(X,\tr)$.
\end{definition}

\begin{definition}
    Let $(X,\tr_X,\ul,\ur)$ and $(Y,\tr,\vl,\vr)$ be 4-Legendrian racks. A rack homomorphism (resp.\ isomorphism) $\phi\colon X\to Y$ is called a \emph{4-Legendrian rack homomorphism} (resp.\ \emph{isomorphism}) if $\phi\ul=\vl\phi$ and $\phi\ur=\vr\phi$.
\end{definition}

\begin{example}[Cf.\ {\cite[Ex.\ 3.7]{kimura-jktr}}]\label{ex:permutations}
    Let $X_\sigma$ be a permutation rack. Then a 4-Legendrian structure is precisely a pair $(\ul,\ur)$ of permutations of $X$ that commute with $\sigma$.
\end{example}

\begin{example}[Cf.\ {\cite[Ex.\ 3.6]{kimura-jktr}}]
    Let $G$ be a group, and let $g,h\in Z(G)$. Let $\ul,\ur\colon G\to G$ be the multiplication maps $x\mapsto gx$ and $x\mapsto hx$, respectively. Then $(\Conj(G),\ul,\ur)$ is a 4-Legendrian quandle.
\end{example}

\begin{rmk}
    For all racks $(X,\tr)$, the maps $\id_X$, $\pi$, and $\pi\inv$ are GL-structures, though they are not necessarily all distinct. Thus, every rack can be equipped with at least one 4-Legendrian structure; a similar statement holds for GL-racks but not for Legendrian racks.
\end{rmk}

We comment on the relationship between our definition of 4-Legendrian racks and the original definition. In his PhD thesis introducing 4-Legendrian racks, Kimura \cite[Sec.\ 4.2]{kimura-thesis} defined a 4-Legendrian rack as a sextuple $(X,\tr,\ul,\ur,\dl,\dr)$ in which $(X,\tr)$ is a rack and $\ul,\ur,\dl,\dr\colon X\to X$ are functions that satisfy the following eight axioms for all $x,y\in X$:
    \begin{equation*}
        \begin{aligned}
            \dl\ur = \ur\dl&=\dr\ul=\ul\dr,\\
        \dr\ul(x\tr x)&= x,\\
        \dl(x\tr y)&= \dl(x)\tr y,\\
        \ul(x\tr y)&= \ul(x)\tr y,\\
        \dr(x\tr y)&= \dr(x)\tr y,\\
        \ur(x\tr y)&= \ur(x)\tr y,\\
        x\tr \dl(y)&=x\tr y=x\tr\dr(y),\\
        x\tr \ul(y)&=x\tr y=x\tr\ur(y).
        \end{aligned}
    \end{equation*}
    Morphisms in this category are defined in the obvious way.
    
    With some work, one can show that Kimura's definition of 4-Legendrian racks is equivalent to our definition. Namely, $\dl$ and $\dr$ are respectively determined entirely by $\ur$ and $\ul$ as well as~$\tr$:
    \begin{equation}\label{eq:d-inv}
        \dl=\ur\inv\pi\inv,\qquad \dr=\ul\inv\pi\inv.
    \end{equation}
    In fact, the forgetful functor defined by $(X,\tr,\ul,\ur,\dl,\dr)\mapsto(X,\tr,\ul,\ur)$ is an isomorphism of categories. 
    This fact is proven similarly to the analogous result for GL-racks  \cite[Prop.\ 3.12]{Ta_2025classificationstructuregeneralizedlegendrian}; we omit the details here.

    In light of this equivalence, we use our definition to study algebraic properties of 4-Legendrian racks, and we use Kimura’s definition to study Legendrian knot invariants.

\begin{example}\label{ex:permutation2}
    Given a 4-Legendrian permutation rack as in Example \ref{ex:permutations}, combining \eqref{eq:d-inv} with Example \ref{pi_is_sigma} yields
    \[
    (\ur\dl)\inv=\sigma=(\ul\dr)\inv.
    \]
\end{example}

\begin{rmk}
    A GL-rack is precisely a 4-Legendrian rack in which $\ul=\ur$. A Legendrian rack is precisely a GL-rack in which $\dl=\ur$ and (as a result) $\dr=\ul$.
\end{rmk}

\subsection{Classification of 4-Legendrian Racks}

Let $(\ul,\ur)$ and $(\vl,\vr)$ be 4-Legendrian structures on $(X,\tr)$. By definition, the 4-Legendrian racks $(X,\tr,\ul,\ur)$ and $(X,\tr,\vl,\vr)$ are isomorphic if and only if there exists a rack automorphism $\phi\in\Aut(X)$ such that $\vl=\phi \ul\phi\inv$ and $\vr=\phi \ur\phi\inv$. Equivalently, the pairs $(\ul,\ur)$ and $(\vl,\vr)$ are \emph{simultaneously conjugate} in $\Aut(X)$; that is, the 4-Legendrian structures lie in the same orbit of $U_X\times U_X$ under the diagonal conjugation action of $\Aut (X)$. This action exists because $U_X$ is a normal subgroup of $\Aut(X)$. To summarize, we have just observed the following.

\begin{prop}[{Cf.\ \cite[Thm.\ 4.1]{Ta_2025classificationstructuregeneralizedlegendrian}}]\label{prop:isoms}
    The isomorphism classes of 4-Legendrian structures on $(X,\tr)$ are precisely the orbits of $U_X\times U_X$ under the diagonal conjugation action of $\Aut (X)$.
\end{prop}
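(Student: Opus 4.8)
The plan is to unwind the definition of a 4-Legendrian rack isomorphism and then recognize the resulting condition as exactly the orbit relation of the diagonal conjugation action. First I would fix the underlying rack $(X,\tr)$ and let $(\ul,\ur)$ and $(\vl,\vr)$ be two 4-Legendrian structures on it. By the definition of a 4-Legendrian rack isomorphism, a morphism $(X,\tr,\ul,\ur)\to(X,\tr,\vl,\vr)$ is a rack isomorphism $\phi\colon X\to X$ — that is, a rack automorphism $\phi\in\Aut(X)$ — that additionally satisfies the two intertwining relations $\phi\ul=\vl\phi$ and $\phi\ur=\vr\phi$. So the two 4-Legendrian racks are isomorphic precisely when such a $\phi$ exists.

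Next I would rewrite these relations in conjugation form. Because $\phi\in\Aut(X)$ is invertible, $\phi\ul=\vl\phi$ is equivalent to $\vl=\phi\ul\phi\inv$, and likewise $\phi\ur=\vr\phi$ is equivalent to $\vr=\phi\ur\phi\inv$. Hence the existence of a 4-Legendrian isomorphism is exactly the statement that $(\vl,\vr)=(\phi\ul\phi\inv,\phi\ur\phi\inv)$ for some $\phi\in\Aut(X)$, which is precisely the condition for $(\ul,\ur)$ and $(\vl,\vr)$ to lie in the same orbit under the diagonal conjugation action $\phi\cdot(u,u')\coloneq(\phi u\phi\inv,\phi u'\phi\inv)$.

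The one point that genuinely requires justification — and the step I expect to be the main, if modest, obstacle — is that this diagonal action is well-defined on $U_X\times U_X$, i.e.\ that conjugating a GL-structure by an arbitrary rack automorphism again yields a GL-structure. This is exactly the normality of $U_X=C_{\Aut(X)}(\Inn(X))$ in $\Aut(X)$, which was already recorded immediately after the definition of $U_X$ and follows from Remark~\ref{rmk:normal}. Thus for every $\phi\in\Aut(X)$ and $u\in U_X$ we have $\phi u\phi\inv\in U_X$, so $\Aut(X)$ does act on $U_X\times U_X$ by diagonal conjugation. With well-definedness secured, the first two paragraphs show that two 4-Legendrian structures are isomorphic if and only if they share an orbit; therefore the isomorphism classes coincide with the orbits, as claimed.
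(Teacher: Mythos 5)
Your proposal is correct and follows essentially the same route as the paper: unwind the definition of a 4-Legendrian rack isomorphism into the two intertwining relations, rewrite them as simultaneous conjugation $\vl=\phi\ul\phi\inv$, $\vr=\phi\ur\phi\inv$, and invoke the normality of $U_X=C_{\Aut(X)}(\Inn(X))$ in $\Aut(X)$ to ensure the diagonal conjugation action is well-defined on $U_X\times U_X$. The paper presents the same argument (slightly more tersely) in the discussion immediately preceding the proposition, so nothing further is needed.
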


\begin{example}
    Let $n\geq 0$ be a nonnegative integer, and let $T_n$ be the trivial quandle of order $n$. Then $U_X=\Aut (T_n)=S_n$, so isomorphism classes of $4$-Legendrian racks with underlying rack $T_n$ correspond to orbits of $S_n\times S_n$ under the diagonal action of $S_n$ by conjugation. These orbits are counted in OEIS sequence A110143 \cite{oeis}; see \cite{mathoverflow-conjugacy}. (We verified this example for all $n\leq 6$ using the computer search described below.) 
\end{example}

\begin{rmk}
    Even if we have GL-rack isomorphisms $(X,\tr,\ul)\cong (X,\tr,\vl)$ and $(X,\tr,\ur)\cong (X,\tr,\vr)$, the 4-Legendrian racks $(X,\tr,\ul,\ur)$ and $(X,\tr,\vl,\vr)$ are not necessarily isomorphic. For example, let $T_3$ be the trivial quandle of order $3$, and let $\ul=\ur=\vl=(2,3)$ and $\vr=(1,3)$ in cycle notation.
\end{rmk}

\begin{rmk}
    Given a 4-Legendrian rack $(X,\tr,\ul,\ur)$, it is not necessarily isomorphic to $(X,\tr,\ur,\ul)$. For example, let $(X,\tr)$ be any rack such that $U_X$ contains a nonidentity element $\phi\neq\id_X$, and let $(\ul,\ur)\coloneq (\phi,\id_X)$.
\end{rmk}

Using Proposition \ref{prop:isoms}, we implemented a \texttt{GAP} \cite{GAP4} program that classifies all $4$-Legendrian racks of a given order $n\leq 11$ up to isomorphism. This program uses the classification of racks up to order $11$ from \cite{Vor_Yang_2019}, which is where the $n\leq 11$ bound comes from.

We were able to complete the search for all $n\leq 6$; our code and data is available in a GitHub repository \cite{github-code}.
Table \ref{tab:isoms} enumerates our data. The table also enumerates 4-Legendrian involutory racks and 4-Legendrian kei; this is motivated by the connections between involutory GL-racks and symmetric racks shown in \cite[Sec.\ 10]{Ta_2025goodinvolutionsconjugationsubquandles}.

\begin{table}[h]\centering

\begin{tabular}{l|ccccccc}
Order            & 0 & 1 & 2 & 3  & 4   & 5    & 6     \\ \hline
Racks            & 1 & 1 & 8 & 33 & 249 & 1592 & 15944 \\
Involutory racks & 1 & 1 & 8 & 24 & 196 & 850  & 9248  \\
Quandles         & 1 & 1 & 4 & 16 & 84  & 448  & 3137  \\
Kei              & 1 & 1 & 4 & 16 & 74  & 342  & 2228 
\end{tabular}
\caption{Number of $4$-Legendrian structures on various families of racks of order up to $6$, counted up to isomorphism.}
\label{tab:isoms}
\end{table}

\begin{cor}
Let $G\coloneq\Aut(X)$. 
    If $\Inn (X)\leq Z(G)$, then the isomorphism classes of 4-Legendrian structures on $(X,\tr)$ are precisely the orbits of $G\times G$ under the diagonal conjugation action of $G$. In particular, if $G$ is abelian, then these isomorphism classes are precisely the group $G\times G$.
\end{cor}

\begin{proof}
    This follows from the fact that $\Inn(X)\leq Z(G)$ if and only if $U_X=G$.
\end{proof}

\begin{example}
    Let $n\in\Z^+$ be a positive integer, let $\sigma\in S_n$ be an $n$-cycle, and let $X_\sigma$ be the corresponding permutation rack of order $n$. Then $\Aut(X_\sigma)=\langle\sigma\rangle\cong\Z/n\Z$, which is abelian, so the set of isomorphism classes of 4-Legendrian structures on $R$ is $\langle\sigma\rangle\times\langle\sigma\rangle$. In particular, there are exactly $n^2$ isomorphism classes of 4-Legendrian racks with underlying rack $X_\sigma$. (Our \texttt{GAP} search verified this fact for all $n\leq 6$.)
\end{example}

\begin{example}
    Let $F$ be the free rack on one generator, identified as the permutation rack $\Z_\sigma$ defined by $\sigma(k):=k+1$ for all $k\in\Z$. Then $\Aut (F)=\langle\sigma\rangle\cong\Z$, which is abelian. Hence, the set of isomorphism classes of 4-Legendrian structures on $F$ is $\{(\sigma^m,\sigma^n)\mid m,n\in\Z\}\cong\Z^2$.
\end{example}

\section{4-Legendrian Rack Invariants}\label{LK}

\subsection{Legendrian Knots}

Legendrian knots are important objects of study in contact topology. 
We briefly review several concepts from the theory. For a formal treatment, we refer the reader to the survey of Etnyre \cite{etnyre}.

The \emph{standard contact structure} is the kernel of the differential 1-form $dz-y\, dx$ in $\R^3$, which is depicted as a plane field in Figure \ref{fig:ContactStructure}. A smooth knot in $\R^3$ is called \emph{Legendrian} if it lies everywhere tangent to the standard contact structure. 

\begin{figure}
    \centering
    \includegraphics[alt={An assignment of a plane to each point in Euclidean three-space. The planes are horizontal along the x-axis and twist in the y-direction. They approach verticality, but never reach it.},scale=0.3]{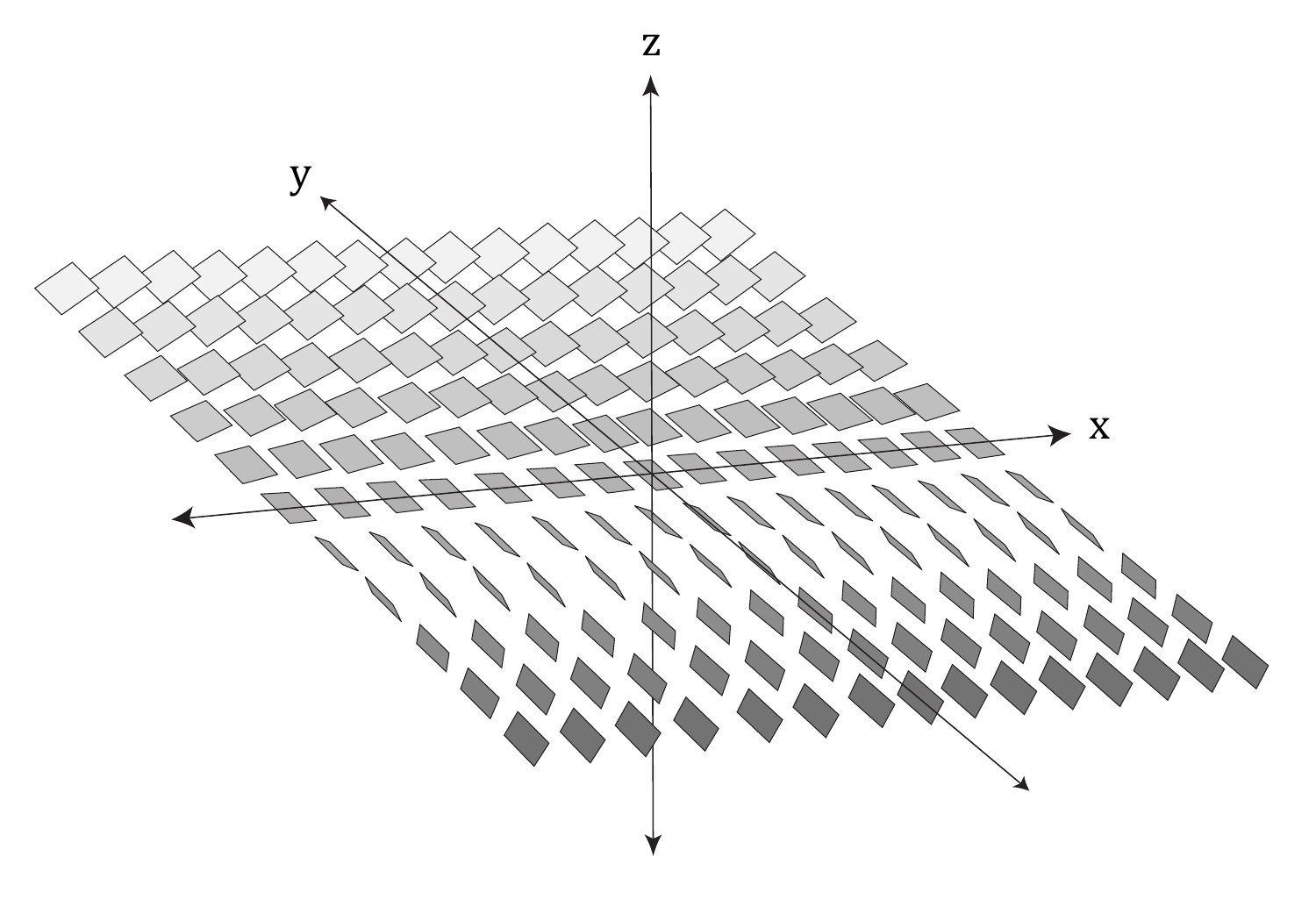}
    \caption{The standard contact structure on $\mathbb{R}^3$, depicted as an assignment of a plane to each point.}
    \label{fig:ContactStructure}
\end{figure}

Legendrian knots are usually studied via their projections to the $xz$-plane viewed from the negative $y$-direction. These are called \emph{front projections}. 
Front projections of Legendrian knots have two key features that distinguish them from projections of smooth knots. First, since tangent lines can never be vertical, front projections have cusps in place of vertical tangencies. Second, due to the direction of twisting, the overstrand at each crossing is always the strand having the more negative slope. Note that an \emph{oriented} front projection never has two leftward-oriented or rightward-oriented cusps placed adjacent to each other. For example, Figure \ref{fig:trefoil0} (cf.\ Figure \ref{fig:trefoil}) depicts a Legendrian left-handed trefoil in its front projection.

\begin{figure}
    \centering
    \includegraphics[alt={An oriented front projection of an Legendrian left-handed trefoil.},width=0.4\linewidth]{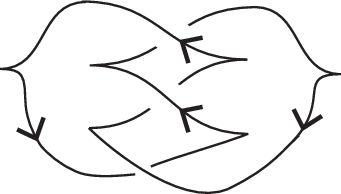}
    \caption{Oriented front projection of a Legendrian left-handed trefoil.}
    \label{fig:trefoil0}
\end{figure}

A major problem in contact topology is the classification of Legendrian knots up to Legendrian isotopy. Every smooth knot type has infinitely many Legendrian representatives. Many of them can be distinguished using their topological knot type and their two \emph{classical invariants}, which we define below. Given a front projection of a Legendrian knot $L$, let $w(L)$ denote the \emph{writhe} of the front projection, and let $U$ and $D$ denote the numbers of upward-oriented and downward-oriented cusps. Note that $U$ and $D$ must be positive, and $U+D$ must be even.

\begin{definition}[{\cite[Sec.\ 2.6.2]{etnyre}}]
    The \emph{Thurston--Bennequin number} of $L$ is the integer
    \[
    \tb(L)\coloneq w(L)-\frac{1}{2}(D+U).
    \]
    The \emph{rotation number} of $L$ is the integer
    \[
    \rot(L)\coloneq \frac{1}{2}(D-U).
    \]
    These two integers are called the \emph{classical invariants} of $L$.
\end{definition}

\begin{example}
    The classical invariants of the Legendrian left-handed trefoil in Figure \ref{fig:trefoil0} are $(\tb,\rot)=(-6,-1)$.
\end{example}

However, there exist many nonequivalent Legendrian knots that share the same topological knot type and classical invariants; see, for example, \cite[Sec.\ 4]{etnyre}. This motivates the search for Legendrian knot invariants that distinguish such pairs of Legendrian knots.

\begin{definition}
    A Legendrian knot invariant is called \emph{effective} if it distinguishes some pair of Legendrian knots sharing the same topological knot type and classical invariants.
\end{definition}

In particular, Kimura \cite{kimura-jktr} and Cheng and He \cite{Cheng_2025fundamentalgeneralizedlegendrianrack} posed the open questions of whether GL-racks and, more generally, 4-Legendrian racks provide effective invariants of Legendrian knots. While this question remains open in general, Kimura \cite[Thm.\ 4.2.3]{kimura-thesis} gave a negative answer for all 4-Legendrian quandles, and our main theorem (Theorem \ref{perm4LRcannotdist}) provides a negative answer for all 4-Legendrian permutation racks.

\subsection{Coloring Invariants}

In 2024, Kimura \cite[Sec.\ 4.2]{kimura-thesis} introduced an invariant of Legendrian knots $L$ called the \emph{fundamental 4-Legendrian rack}, which we denote by $F(L)$. This is defined in the same vein as the fundamental GL-rack of $L$, which Karmakar, Saraf, and Singh introduced in \cite[Thm.\ 4.3]{Karmakar_2025}, and the fundamental quandle of a smooth knot, which was independently introduced by Joyce \cite{Joyce_1982} and Matveev \cite{1984_Matveev}.

We outline the construction here; see \cite[Sec.\ 4.2]{kimura-thesis} for details, and cf.\ \cite{Karmakar_2025,Cheng_2025fundamentalgeneralizedlegendrianrack}. View 4-Legendrian racks as an algebraic theory with two binary operations $\tr^{\pm 1}$ and four unary operations $\ul^{\pm 1},\ur^{\pm 1}$. By general results from universal algebra, \emph{free 4-Legendrian racks} exist and are characterized up to isomorphism by the usual universal property. Moreover, we can take quotients of 4-Legendrian racks by congruence relations.

Fix an oriented front projection of a Legendrian knot $L$. Label the \emph{arcs} (i.e., connected components) of the front projection by $x_1,\dots,x_n$, and let $F\coloneq \langle x_1,\dots,x_n\rangle$ be the free 4-Legendrian rack generated by $x_1,\dots,x_n$. Starting at any crossing in the front projection, traverse the knot using its given orientation. At each cusp, impose a label $\ul,\ur,\dl$, or $\dr$ depending on the orientation of the cusp in the obvious way; cf.\ \eqref{eq:d-inv}. At each crossing, impose a relation on $F$ between arcs as illustrated in Figure \ref{fig:crossings}. Define $F(L)$ to be the quotient of $F$ by the congruence relation generated by these $n$ crossing relations.

\begin{figure}
    \centering
    \includegraphics[alt={Two local pictures of crossings in front projections of Legendrian knots. The first is a negative crossing, and the second is a positive crossing. The arcs are labeled using the relations of the fundamental 4-Legendrian rack.},width=0.4\linewidth]{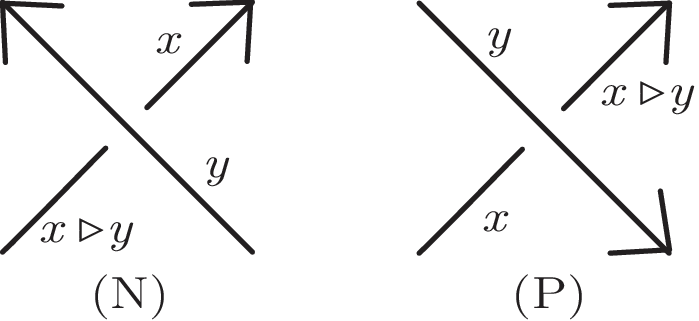}
    \caption{Relations imposed on $F(L)$ between arcs at negative and positive crossings.}\label{fig:crossings}
\end{figure}

One can show that the isomorphism class of $F(L)$ is an invariant of $L$ by using Legendrian Reidemeister moves; see \cite[Sec.\ 4.2]{kimura-thesis}. Of course, it is difficult to distinguish infinite 4-Legendrian racks directly from their generators and relations, so we instead consider \emph{colorings} of $F(L)$ by finite 4-Legendrian racks.
These are easier to compute since $F(L)$ is finitely presented (having $n$ generators and $n$ relations in the above construction).

\begin{definition}[{\cite[Def.\ 23]{kimura-thesis}}]
    Let $R\coloneq(X,\tr,\ul,\ur)$ be a 4-Legendrian rack. A \emph{coloring} of $L$ by $R$ is a 4-Legendrian rack homomorphism $F(L)\to R$.
\end{definition}

Since $F(L)$ is an invariant of $L$, the set $\operatorname{Hom}(F(L),X)$ of colorings of $L$ by a given 4-Legendrian rack $(X,\tr,\ul,\ur)$ is also an invariant of $L$; cf.\ \cite[Rem.\ 30]{kimura-thesis}.

\begin{example}
    Let $L$ be the Legendrian left-handed trefoil with classical invariants $(\tb,\rot)=(-6,-1)$ in Figure \ref{fig:trefoil0}. By following the procedure described above, we obtain Figure \ref{fig:trefoil}. Thus, the fundamental 4-Legendrian rack $F(L)$ is the quotient of the free 4-Legendrian rack $\langle x_1,x_2,x_3\rangle$ by the congruence relation generated by the relations
    \[
    x_3\tr x_1=\dl\ur(x_2),\quad  x_2\tr x_3=\ul\dr(x_1), \quad x_1\tr x_2=\ul\ur(x_3).
    \]
\end{example}

\begin{figure}
    \centering
    \includegraphics[alt={An oriented front projection of an Legendrian left-handed trefoil. The arcs are labeled in a way that corresponds to the fundamental 4-Legendrian rack.},width=0.5\linewidth]{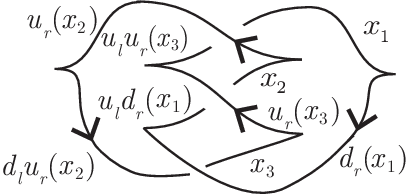}
    \caption{Legendrian left-handed trefoil with arcs labeled.}\label{fig:trefoil}
\end{figure}

\section{Proof of the Main Theorem}\label{main}

Fix an oriented front projection of a Legendrian knot $L$ with $U$ upward-oriented cusps and $D$ downward-oriented cusps, and let $(X_\sigma,\ul,\ur,\dl,\dr)$ be a permutation rack equipped with a 4-Legendrian structure. To prove our main result, we construct a canonical form for relations in the image of a given coloring of $L$ by $X_\sigma$.

This idea is similar to an approach Cheng and He took to prove the main theorem of \cite{Cheng_2025fundamentalgeneralizedlegendrianrack}. Namely, they observed that every relation in the canonical presentation of the fundamental GL-rack of $L$ can be written in the form
\begin{align*}
    x&=u^{p_n}d^{q_n}(x_n)\tr^{\epsilon_n}x_{k_n}\\
    &= u^{p_n+p_{n-1}}d^{q_n+q_{n-1}}(x_{n-1}\tr^{\epsilon_{n-1}}x_{k_n-1})\tr^{\epsilon_n}x_{k_n}\\
    &= \dots\\
    &= u^U d^D(\dots(x\tr^{\epsilon_1}x_{k_1})\tr^{\epsilon_2}\dots)\tr^{\epsilon_n}x_{k_n},
\end{align*}
where $U=\sum p_n$, $D=\sum q_n$, and $\epsilon_n\in\{\pm 1\}$ with $\sum\epsilon_n=w(L)$.

The obstruction to taking this exact approach with 4-Legendrian racks is that $\ul$ and $\dr$ generally do not commute with $\ur$ and $\dl$. Nevertheless, we have the following.

\begin{prop}\label{prop:word}
    Let $x$ be one of the generators of $F(L)$, and consider either of the two relations of $F(L)$ corresponding to a crossing in which $x$ is an understrand. Then this relation can be written in the form
    \begin{equation}\label{eq:rln}
        x=W (\dots(x\tr^{\epsilon_1}x_{k_1})\tr^{\epsilon_2}\dots)\tr^{\epsilon_n}x_{k_n},
    \end{equation}
    where $W$ is a word of length $U+D$ consisting of $U$ letters in $\{\ul,\ur\}$ and $D$ letters in $\{\dl,\dr\}$ (without inverses), and $\epsilon_n\in\{\pm 1\}$ with $\sum\epsilon_n=w(L)$. Moreover, $W$ alternates between letters in $\{\ul,\dl\}$ and letters in $\{\ur,\dr\}$.
\end{prop}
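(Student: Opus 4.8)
The plan is to adapt the unfolding argument of Cheng and He to the four-operator setting by traversing the front projection once and recording every operation encountered. Fix the crossing at which $x$ is the understrand and begin traversing the knot along its orientation. The relation at this crossing expresses $x$ in the form $W_n\bigl(x_n\tr^{\epsilon_n}x_{k_n}\bigr)$, where $x_n$ is the preceding understrand arc, $x_{k_n}$ is the overstrand, $\tr^{\epsilon_n}$ is the crossing factor, and $W_n$ collects the unary labels of the cusps lying between $x_n$ and $x$. Substituting the analogous relation for $x_n$, then for $x_{n-1}$, and so on, produces a chain of substitutions; because the front projection is a closed curve, after one full traversal the chain returns to $x$, yielding a relation of the shape \eqref{eq:rln} in which every crossing and every cusp has been recorded exactly once.

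The mechanism that collects the unary labels into the single prefix word $W$ is the family of axioms $\ul(x\tr y)=\ul(x)\tr y$, $\ur(x\tr y)=\ur(x)\tr y$, $\dl(x\tr y)=\dl(x)\tr y$, and $\dr(x\tr y)=\dr(x)\tr y$ from \eqref{eq:kimura}. Read in reverse, each identity lets a unary operator applied to the left factor of a $\tr$ be pulled outside the entire $\tr$-expression. Since every label $\ul,\ur,\dl,\dr$ lies in $U_X=C_{\Aut(X)}(\Inn(X))$ and hence commutes with each $\beta_y\inv\in\Inn(X)$, the same identities hold with $\tr$ replaced by $\tr\inv$, so the operators can be migrated to the front past both kinds of crossing factors. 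Applying these moves repeatedly pushes every unary operator to the outermost position, in the order dictated by the traversal, leaving the nested expression $(\dots(x\tr^{\epsilon_1}x_{k_1})\tr^{\epsilon_2}\dots)\tr^{\epsilon_n}x_{k_n}$ intact. The counting is then immediate: the traversal meets each of the $U+D$ cusps once, contributing $U$ letters from $\{\ul,\ur\}$ (at the upward cusps) and $D$ letters from $\{\dl,\dr\}$ (at the downward cusps), so $W$ has length $U+D$; and it passes under each crossing exactly once, contributing a factor $\tr^{\epsilon_i}$ whose sign is the sign of that crossing, whence $\sum\epsilon_i=w(L)$.

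For the alternation the key observation is geometric: along a front projection the horizontal direction of travel reverses exactly at the cusps, so the cusps encountered during a traversal alternate strictly between left-pointing and right-pointing cusps. Under the labeling convention, left cusps are labeled by operators in $\{\ul,\dl\}$ and right cusps by operators in $\{\ur,\dr\}$, regardless of their upward or downward orientation. Because the letters of $W$ appear in traversal order, this left--right alternation of the cusps translates directly into the asserted alternation of $W$ between $\{\ul,\dl\}$ and $\{\ur,\dr\}$.

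The main obstacle will be the bookkeeping of the first two steps: one must check that pulling the unary operators outward preserves their traversal order, so that the alternation established geometrically survives intact in $W$, and that the wraparound of the traversal closes up the relation into exactly the claimed form without introducing stray inverses. The remaining technical points are justifying the $\tr\inv$ commutation and confirming that the overstrand arguments $x_{k_i}$ stay bare generators, using $x\tr\dl(y)=x\tr y=x\tr\dr(y)$ to absorb any down-operators that would otherwise decorate a right factor.
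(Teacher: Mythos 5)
Your proof is correct and takes essentially the same route as the paper's: unfold the crossing relations \`a la Cheng--He, pull the cusp operators to the front using the axioms in \eqref{eq:kimura} (equivalently, membership in $U_X$), observe that an orientation-respecting traversal produces no inverses, and deduce the alternation from the geometric fact that leftward- and rightward-oriented cusps alternate along a front projection. The paper's proof is simply terser, deferring the unfolding and the counting of $\sum\epsilon_n=w(L)$ to the displayed Cheng--He calculation rather than spelling them out as you do.
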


\begin{proof}
    In the given relation for $x$, it follows from \eqref{eq:d-inv} that we can move all of the cusp functions $\ul,\ur,\dl,\dr$ to the leftmost part of the word, giving a string $W$ of length $U+D$. 
    
    To obtain the defining relations for $F(L)$, we had to traverse the front projection of $L$ using its given orientation. It follows that none of the relations obtained in this way contain the inverse of any of the cusp functions. Moreover, it is impossible for two rightward-oriented cusps or two leftward-oriented cusps to appear in a row. It follows that our word alternates between left and right cusp functions. Hence, $W$ satisfies the desired properties. The rest is identical to the above calculation of Cheng and He.
\end{proof}

\begin{lemma}\label{lem:permutation}
    In the image of any coloring of $L$ by a 4-Legendrian permutation rack $X_\sigma$, any word of the form \eqref{eq:rln} can be rewritten in either the form
    \[
    x=(\dl\dr)^{\rot(L)}\sigma^{\rot(L)+\tb(L)}(x)\qquad \text{or}\qquad x=(\dr\dl)^{\rot(L)}\sigma^{\rot(L)+\tb(L)}(x).
    \]
\end{lemma}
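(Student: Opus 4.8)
The plan is to carry out the entire argument inside the image of the coloring, where the relation \eqref{eq:rln} becomes an honest equation between permutations of $X$. First I would record the arithmetic of the permutation rack: in $X_\sigma$ we have $a\tr b=\sigma(a)$ and $a\tr\inv b=\sigma\inv(a)$ for all $a,b$, so applying $\tr^{\epsilon_i}x_{k_i}$ simply applies $\sigma^{\epsilon_i}$, independently of the second argument. Since these commute and $\sum\epsilon_i=w(L)$, the parenthesized ``inner'' part of \eqref{eq:rln} collapses, and the relation simplifies to $x=W\sigma^{w(L)}(x)$, where now $x$ denotes its color. I would then assemble the identities that simplify $W$. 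By Example \ref{ex:permutations}, $\ul$ and $\ur$ commute with $\sigma$; from \eqref{eq:d-inv} and $\pi=\sigma$ we get $\dl=\ur\inv\sigma\inv$ and $\dr=\ul\inv\sigma\inv$, so $\dl,\dr$ commute with $\sigma$ as well, and moreover
\[
\ur\dl=\dl\ur=\sigma\inv=\ul\dr=\dr\ul .
\]
That is, every product of one ``up'' map and one ``down'' map of opposite handedness equals $\sigma\inv$; crucially, $\ul,\ur$ (resp.\ $\dl,\dr$) need not commute with each other, so these are the only cancellations available.

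The heart of the argument is the reduction of $W$. By Proposition \ref{prop:word}, $W$ has length $U+D$, alternates between letters in $\{\ul,\dl\}$ and letters in $\{\ur,\dr\}$, and uses exactly $U$ up-letters and $D$ down-letters. I would argue that whenever both an up-letter and a down-letter remain, the up/down pattern must change somewhere, so some adjacent pair consists of one up-letter and one down-letter; by the alternation this pair has opposite handedness and hence equals $\sigma\inv$ by the identities above. I replace this pair by $\sigma\inv$ and pull it to the front (legal since all four maps commute with $\sigma$); because I deleted one left-letter and one right-letter, the remaining word still alternates. Iterating $\min(U,D)$ times exhausts one of the two types.

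To finish, suppose first that $D\geq U$, i.e.\ $\rot(L)\geq 0$. After $U$ collapses I am left with $\sigma^{-U}$ times an alternating word consisting solely of down-letters of length $D-U=2\rot(L)$; being alternating and all-down, it equals exactly $(\dl\dr)^{\rot(L)}$ or $(\dr\dl)^{\rot(L)}$. Combining with $x=W\sigma^{w(L)}(x)$ and using $w(L)-U=\rot(L)+\tb(L)$ together with the commutativity of $\sigma$ with the down-maps yields
\[
x=(\dl\dr)^{\rot(L)}\sigma^{\rot(L)+\tb(L)}(x)\quad\text{or}\quad x=(\dr\dl)^{\rot(L)}\sigma^{\rot(L)+\tb(L)}(x),
\]
as claimed. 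When $D<U$ the same reduction leaves $\sigma^{-D}$ times an all-up alternating word of length $U-D$; substituting $\ul=\sigma\inv\dr\inv$ and $\ur=\sigma\inv\dl\inv$ rewrites, e.g., $(\ul\ur)^{|\rot(L)|}$ as $\sigma^{2\rot(L)}(\dl\dr)^{\rot(L)}$ (a negative power), and the same exponent bookkeeping recovers the two displayed forms.

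The main obstacle is precisely the failure of $\ul,\ur$ (and of $\dl,\dr$) to commute: I cannot freely sort the letters of $W$, so I must exploit the alternation from Proposition \ref{prop:word} to guarantee both that a collapsible opposite-handed up–down pair is always available while both types remain and that each collapse preserves the alternation of what is left. The only other point needing care is the sign of $\rot(L)$, which determines whether down-letters or up-letters survive the reduction and is handled by the short symmetric computation above.
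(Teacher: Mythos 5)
Your proof is correct and follows essentially the same route as the paper's: collapse the rack-operation string to $\sigma^{w(L)}$, use the alternation guaranteed by Proposition~\ref{prop:word} to find an adjacent opposite-handed up--down pair whenever both types remain, cancel each such pair to $\sigma^{-1}$ (the paper does this via a $\sigma\sigma^{-1}$ insertion and Example~\ref{ex:permutation2}, which is the same identity you invoke), and count the surviving letters. The one genuine difference is that you explicitly handle the case $\rot(L)<0$, where up-letters rather than down-letters survive the reduction; the paper's proof tacitly assumes cancellation proceeds until the up-letters are exhausted, so your treatment of that case is a welcome refinement rather than a departure.
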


\begin{proof}
    Since $X_\sigma$ is a permutation rack, the string to the right of $W$ in \eqref{eq:rln} collapses to $\sigma^{w(L)}$; that is, $x=W\sigma^{w(L)}$. 
    
    Since $L$ is a Legendrian knot, $U+D$ must be even, and $U,D>0$. It follows from Proposition \ref{prop:word} that one of $u_ld_r$, $d_ru_l$, $u_rd_l$, or $d_lu_r$ appears in $W$. Introduce a $\sigma\sigma^{-1}$ in the middle of this pair. Since $\sigma$ must commute with $u_l, u_r, d_l, \text{ and } d_r$, we can commute the $\sigma\inv$ to the rightmost part of the word. By Example \ref{ex:permutation2}, we can then use the $\sigma$ to cancel out the $u_ld_r$, $d_ru_l$, $u_rd_l$, or $d_lu_r$. We can repeat this action until there are no longer any upward cusp functions $\ul,\ur$ in our word.

    The word now only contains $D-U=2\rot(L)$ total $d_l$'s and $d_r$'s. Either $d_l$ or $d_r$ begin the word, and these two functions must alternate. This leaves us with $\rot(L)$ pairs of $d_ld_r$'s or $d_rd_l$'s followed by $w(L) - U=\rot(L)+\tb(L)$ total $\sigma$'s in our word, as desired.
\end{proof}

As an aside, Lemma \ref{lem:permutation} allows us to read $\rot(L)$ and $\tb(L)$ off of the relations of a coloring $F(L)\to X_\sigma$. This yields the following, which may be seen as a stronger version of the main theorem of \cite{Cheng_2025fundamentalgeneralizedlegendrianrack} for 4-Legendrian racks.

\begin{cor}[Theorem \ref{thm:classical}]\label{cor:classical}
    The fundamental 4-Legendrian rack $F(L)$ recovers the classical invariants of $L$.
\end{cor}

\begin{rmk}
    Some authors consider the topological knot type to be a classical invariant of Legendrian knots. The statement of Corollary \ref{cor:classical} still holds under this convention. To see this, recall that the fundamental quandle is a complete invariant of smooth knots (see \cite{1984_Matveev,Joyce_1982}). Given a Legendrian embedding $L$ of a knot $K$, we can recover the fundamental quandle of $K$ from $F(L)$ by imposing the congruence relation generated by $x\sim\ul(x)\sim\dr(x)\sim\ur(x)\sim\dl(x)$ for all $x\in F(L)$; cf.\ \cite[Rem.\ 4.6]{Karmakar_2025}.
\end{rmk}

We now prove the main theorem.

\begin{theorem}[Theorem \ref{perm4LRcannotdist0}]\label{perm4LRcannotdist}
    4-Legendrian permutation racks cannot distinguish isotopy classes of Legendrian knots with the same classical invariants.
\end{theorem}

\begin{proof}
    Let $(X_\sigma, u_l, u_r, d_l, d_r)$ be any 4-Legendrian permutation rack. Assume $L_1$ and $L_2$ are two distinct Legendrian isotopy classes of knots with the same classical invariants. Fix oriented front projections of $L_1$ and $L_2$.

    In the image of a coloring of $L_1$ or $L_2$ by $(X_\sigma, u_l, u_r, d_l, d_r)$, we have $x \triangleright y = \sigma(x)$ at each crossing relation. Therefore, we can combine all the relations for $L_1$ and $L_2$ into a single generator and single relation $\langle x: x=R_{L_1}(x)\rangle$ and $\langle x: x=R_{L_2}(x)\rangle$, where $x$ can start at any arc of the front projections of $L_1$ and $L_2$, respectively.

    By Lemma \ref{lem:permutation}, both of the relations $R_{L_i}(x)$ can be written in either the form 
    \[
    x=(\dl\dr)^{\rot(L)}\sigma^{\rot(L)+\tb(L)}(x)\qquad \text{or}\qquad x=(\dr\dl)^{\rot(L)}\sigma^{\rot(L)+\tb(L)}(x).
    \]
    We can choose $x$ in each respective front projection such that the word made of $d_l$'s and $d_r$'s begins with the same function. Since $L_1$ and $L_2$ share the same classical invariants, it follows that these two relations are identical. Hence, by the definition of a 4-Legendrian rack coloring, the sets of colorings of $L_1$ and $L_2$ by $(X_\sigma, u_l, u_r, d_l, d_r)$ are equal.
\end{proof}


\section{Areas for Further Inquiry}\label{future}

We conclude by proposing areas for further research. These complement the open question of whether there exist 4-Legendrian racks that can distinguish Legendrian knots with the same classical invariants.

\begin{itemize}
    \item Study the coloring invariant of 4-Legendrian racks that are not permutation racks.
    \item Extend the algebraic results for GL-racks in \cite{Ta_2025classificationstructuregeneralizedlegendrian} to 4-Legendrian racks.
    \item Develop enhancements of the 4-Legendrian rack coloring invariant. 
    \item Study the algebraic properties and distinguishing power of \emph{4-Legendrian biracks}, which Kimura introduced in his PhD thesis \cite{kimura-thesis}.
    \item Develop rack-theoretic invariants of Legendrian graphs.
    \item Develop rack-theoretic invariants of transverse knots.
\end{itemize}

\section*{Acknowledgments}

We would like to thank the organizers of the Unknot V conference for giving the authors a chance to meet each other and Jose Ceniceros, whose work with Mohamed Elhamdadi and Sam Nelson we are building off of. The second author would also like to thank Lenny Ng, Roger Casals, Orsola Capovilla-Searle and Tye Lidman for fruitful conversations concerning Legendrian knots.



\BibLatexMode{\printbibliography}

\end{document}